\documentclass[11pt]{article}

\usepackage[a4paper,margin=1in]{geometry}
\usepackage{amsmath,amssymb,amsthm,mathtools}
\usepackage{microtype}
\usepackage[hidelinks]{hyperref}
\allowdisplaybreaks

\newtheorem{theorem}{Theorem}[section]
\newtheorem{proposition}[theorem]{Proposition}
\newtheorem{lemma}[theorem]{Lemma}

\theoremstyle{remark}
\newtheorem{remark}[theorem]{Remark}

\newcommand{\ind}{\mathbf 1}
\newcommand{\e}{\mathrm e}
\newcommand{\F}{\mathcal F}
\DeclareMathOperator{\lcm}{lcm}

\title{Unbounded logarithmic limsup in Erd\H{o}s Problem 684\\
via shifted carry scheduling}
\author{Ji Ho Bae}
\date{September 2026}

\hypersetup{
  pdftitle={Unbounded logarithmic limsup in Erdos Problem 684 via shifted carry scheduling},
  pdfauthor={Ji Ho Bae}
}

\begin{document}
\maketitle

\begin{abstract}
For \(1\leq k\leq n\), let
\[
 u(n,k)=\prod_{p\leq k}p^{\nu_p\binom nk},
 \qquad
 f(n)=\min\{1\leq k\leq n:u(n,k)>n^2\}.
\]
The minimum is interpreted as \(+\infty\) if the set is empty.
Here \(\nu_p(m)\) denotes the exponent of the prime \(p\) in \(m\).
Erd\H{o}s Problem~684 asks for bounds on \(f(n)\).  We prove
\[
 \limsup_{n\to\infty}
 \frac{f(n)}{\log n}
 \frac{\log\log\log n}{\log\log n}\geq \frac12.
\]
In particular,
\(\limsup_{n\to\infty}f(n)/\log n=\infty\), so no uniform estimate
\(f(n)=O(\log n)\) is possible.

The proof constructs integers \(n=tL_M-h-1\), where
\(L_M=\lcm(1,\ldots,M)\).  Product-cell coding yields simultaneous two-sided
approximations to \(tL_M\) modulo every prime in \((M,K]\).  Writing
\(n+1=tL_M-h\), the shift by \(h\) folds both signs into the same one-sided
carry region.  The carries at levels at most \(M\) are bounded by
\(\log\binom{h+k}{h}\).  For the prime powers above \(K\), a truncated CRT
witness has modulus below the search range; exponential weighting then gives
an exponentially small exceptional set.  An elementary anchored-fibre lemma
selects a multiplier satisfying both requirements.  Together, these
ingredients prove the stated bound unconditionally.  The theorem has been
formally verified in Lean~4 with Mathlib.
\end{abstract}

\medskip
\noindent\textbf{2020 Mathematics Subject Classification.}
11B65 (primary); 11N05, 11A51 (secondary).

\smallskip
\noindent\textbf{Keywords.}
Binomial coefficients, small prime factors, Kummer carries, Chinese
remainder theorem, Erd\H{o}s problems.

\section{Introduction and statement of the result}

For an integer \(n\geq2\) and \(0\leq k\leq n\), write \(\nu_p(m)\) for
the exponent of the prime \(p\) in \(m\), and define the part of
\(\binom nk\) supported on primes at most \(k\) by
\begin{equation}\label{eq:def-u}
 u(n,k):=\prod_{p\leq k}p^{\nu_p\binom nk}.
\end{equation}
The product is empty, and hence equal to one, when \(k=0\).
The associated threshold is
\begin{equation}\label{eq:def-f}
 f(n):=\min\{1\leq k\leq n:u(n,k)>n^2\}.
\end{equation}
The general upper bound of Alexeev, Putterman, Sawhney, Sellke, and Valiant
\cite{APSSV26} implies that the displayed set is nonempty for all
sufficiently large \(n\); we set \(f(n)=+\infty\) when the defining set is
empty.
Erd\H{o}s introduced this threshold, under the notation \(A(n)\),
in 1979 \cite[pp.~76--77]{Erdos1979}; its modern formulation is recorded as
Problem~684 in Bloom's database \cite{Bloom684}.  Erd\H{o}s observed that
Mahler's theorem implies \(f(n)\to\infty\), ineffectively
\cite{Mahler1953}.

Alexeev, Putterman, Sawhney, Sellke, and Valiant proved the general bound
\begin{equation}\label{eq:known-upper}
 f(n)\leq
 \left(\frac{24}{\pi^2-6}+o(1)\right)(\log n)^2,
\end{equation}
and the same authors constructed a sequence on which
\(f(n)\geq(1/2+o(1))\log n\) \cite[Theorem~2.1]{APSSV26}.
For comparison, Li proved the density-one law
\[
 f(n)=\left(\frac{2}{1-\gamma}+o(1)\right)\log n
\]
for almost all \(n\), where \(\gamma\) is the Euler--Mascheroni constant
\cite{Li2026}.  That result does not constrain the exceptional sequence
studied here.  Erd\H{o}s asked for bounds on \(f(n)\).
Our theorem gives a new quantitative lower bound on its worst-case side,
strengthening the previously known subsequential lower bound by a factor of
order \(\log\log n/\log\log\log n\).  As an immediate consequence it settles
the associated logarithmic-scale question: \(f(n)/\log n\) is unbounded, and
hence no general estimate \(f(n)=O(\log n)\) can hold.  The remaining
quantitative problem is to close the gap between our subsequential lower bound
and the general upper bound \eqref{eq:known-upper}.

\begin{remark}[Relation to arXiv:2604.23784v1--v2]
\label{rem:version-history}
Versions 1 and 2 used a weighted extension of Timofeev's
mean-in-progressions method \cite{Timofeev1997} that was not justified by the
cited theorem; those arguments are withdrawn.  The present version is a
complete rewrite proving the stronger quantitative statement
\eqref{eq:main} by an independent shifted-carry argument.  No Timofeev,
Burgess, or Fourier-sieve input is used in the proof below.
\end{remark}

Our main result is the following.

\begin{theorem}[Quantitative worst-case lower bound for Problem~684]
\label{thm:main}
One has
\begin{equation}\label{eq:main}
 \boxed{
 \limsup_{n\to\infty}
 \frac{f(n)}{\log n}
 \frac{\log\log\log n}{\log\log n}\geq\frac12.}
\end{equation}
Equivalently, there is a sequence \(n_j\to\infty\) such that
\[
 f(n_j)\geq\left(\frac12-o(1)\right)
 \log n_j\frac{\log\log n_j}{\log\log\log n_j}.
\]
Consequently,
\begin{equation}\label{eq:unbounded}
 \limsup_{n\to\infty}\frac{f(n)}{\log n}=\infty.
\end{equation}
Equivalently, for every constant \(C>0\), there are arbitrarily large
integers \(n\) for which \(f(n)>C\log n\).
\end{theorem}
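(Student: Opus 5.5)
We will construct, for each large parameter, an integer \(n\) with \(u(n,k)\leq n^2\) for every \(k\leq K_0\), where \(K_0=(\tfrac12-o(1))\log n\,\log\log n/\log\log\log n\). This gives \(f(n)>K_0\), and \eqref{eq:unbounded} follows at once. The construction follows the abstract. Fix \(M\) and put \(L_M=\lcm(1,\ldots,M)\), so that \(\log L_M=(1+o(1))M\). Take \(n=tL_M-h-1\), so \(n+1=tL_M-h\), with a small shift \(h\) and a multiplier \(t\) still to be chosen. By Kummer's theorem, \(\nu_p\binom nk\) is the number of carries when adding \(k\) and \(n-k\) in base \(p\). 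Equivalently, it counts the levels \(j\) with \((n \bmod p^j)<(k\bmod p^j)\). We split the primes \(p\leq k\) into three ranges: \(p\leq M\), \(M<p\leq K\), and prime powers \(p^j>K\).

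\emph{Small primes.} For \(p\leq M\) and \(p^j\mid L_M\) we have \(n\equiv -h-1\pmod{p^j}\). A carry at level \(j\) therefore happens exactly when the residues of \(k\) and of \(-h-1\) force a borrow. Summing \(\log p\) over these levels, the total contribution is at most \(\log\binom{h+k}{h}\), which is the contribution of \(\binom{h+k}{k}\). We choose \(h\) of size about \(\log n/\log\log n\) or smaller. Then \(\log\binom{h+k}{h}\leq h\log(e(h+k)/h)\) is \(o(\log n)\) uniformly in \(k\leq K_0\).

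\emph{Middle primes.} For \(M<p\leq K\) we want \(tL_M\) to lie very close to a multiple of \(p\), on either side. Product-cell coding (a CRT assignment of \(t\)) gives \(|tL_M \bmod^{\pm} p|\) small simultaneously for all these primes, with the one-sided distance at most the shift \(h\). The shift then folds both signs into the region \(n\bmod p^j\geq k\bmod p^j\) at the relevant levels, except for the first few levels. The remaining carries again contribute at most \(\log\binom{h+k}{h}\) plus \(O(\sum_{M<p\leq K}\log p\cdot(\text{few levels}))\) at the levels that actually matter.

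\emph{Large prime powers.} For \(p^j>K\), we expect \(n\bmod p^j\) to be typical. We bound the expected excess \(\sum_{p^j>K}\log p\,\ind[(n\bmod p^j)<k]\) by averaging over \(t\) in a range whose length exceeds the modulus of a truncated CRT witness. Applying an exponential-moment (Chernoff-type) bound shows that the proportion of \(t\) for which some \(k\leq K_0\) has total contribution exceeding \(2\log n-(\text{small and middle budgets})\) is exponentially small.

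\emph{Combining.} An anchored-fibre counting lemma then produces a \(t\) that satisfies the middle-prime approximation and also avoids the exceptional set. The parameters are \(M\approx\log n/\log\log n\)-scale and \(K\approx K_0\), which is what creates the \(\log\log n/\log\log\log n\) gain. Balancing \(\log L_M\approx M\), the coding cost \(\log t\), and the constraint \(\log n\approx M+\log t\) should yield the constant \(\tfrac12\).

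The main obstacle is the large-prime-power step: the averaging over \(t\) must remain valid inside the fibre that the middle-prime coding imposes. I would first try to choose the CRT witness modulus below the length of that fibre, so the residues \(n\bmod p^j\) are exactly equidistributed there. Only the remaining tail would then need the exponential-weighting bound.
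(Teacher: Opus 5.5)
Your outline matches the paper's architecture: the same $n=tL_M-h-1$, the binomial bound at levels $q\le M$, product-cell pigeonholing, a truncated CRT witness with exponential weighting, and anchored fibres. The gap is that the step which fixes the constant $\tfrac12$ is missing, and as you set it up the tail step fails.

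\textbf{The tail threshold.} You put the exceptional threshold at $2\log n$ minus the other budgets, and you also require the range of $t$ to exceed the witness modulus. These two requirements are incompatible.
\begin{itemize}
\item The dominant tail events are $[n]_{p^2}<K$ for $M<p\le K$. Each has weight only $\log p$ but modulus $p^2$, so a truncated witness of weight $W$ has reduced modulus $Q\approx e^{2W}$; this is \eqref{eq:Q-before-plus-one}.
\item Since $t$ runs over at most $J\le n/L_M$ values, the count $R(\mathbf r)(J/Q(\mathbf r)+1)$ is useful only when $2W<\log J<\log n$. A threshold near $2\log n$ is therefore hopeless.
\item The threshold must be a separate parameter $\beta M$ with $\log J=\sigma M>2\beta M$. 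The bad set then has size at most $2Je^{-(\lambda\beta-o(1))M}$.
\item The anchored-fibre condition $J+1>C_M(|\mathcal F_M|+1)$ needs this to be below $J/C_M=Je^{-(c+o(1))M}$, i.e.\ $c<\lambda\beta$. Saying the bad set is ``exponentially small'' is not enough.
\end{itemize}
This factor $2$ caps the output at $c/(1+\sigma)<\beta/(1+2\beta)<\tfrac12$, with $\tfrac12$ reached only as $\beta\to\infty$. The quantities you balance ($\log L_M$, coding cost, $\log t$) only force $\log t>cM$, which would suggest $c/(1+c)\to1$. So your $\tfrac12$ is asserted, not derived.

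\textbf{A missing lower bound on $t$.} Because $\beta$ must be large, you must also forbid small multipliers: put $t\le e^{\tau M}$ into the bad set, with $\tau<\sigma-c$ and $\beta+\tfrac3{20}<2(1+\tau)$. Then $2\log n\ge2(1+\tau+o(1))M$ absorbs the tail budget. Your proposal imposes no lower bound on $t$.

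\textbf{The ``main obstacle'' is misdiagnosed.} Nothing needs to be averaged inside a fibre, and the fibres of the product-cell map are arbitrary sets, not progressions, so no such equidistribution is available. The bad set is a set of differences $t$, counted absolutely in $[1,J]$. The anchored-fibre lemma uses only its cardinality.

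\textbf{Parameters.}
\begin{itemize}
\item With $h\approx\log n/\log\log n\asymp M/\log M$, one has $\log(p/h)\approx\log A+\log\log M\approx2\log\log M$ on average over $(M,K]$. Hence $\log C_M\approx2cM$, and the same scheme gives only $\tfrac14$.
\item You need $\log(M/h)=o(\log\log M)$, for instance $h=M/(20\log A)$. Then $\log\binom{h+k}{h}$ is $(\tfrac1{20}+o(1))M$, not $o(\log n)$; this is affordable only because $2\log n$ leaves room.
\item $M$ must be of order $\log n$, not $\log n/\log\log n$. The gain $\log\log n/\log\log\log n$ is $A=K/M$.
\end{itemize}

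\textbf{Middle primes.} The fold does not move $[n]_p$ out of the carry region. It gives $[n]_p=p-1-d_p$ with $1\le d_p<2h$, so a carry at level $p$ occurs whenever $[k]_p\ge p-d_p$. Your extra term $O(\sum_{M<p\le K}\log p\cdot(\cdots))$, read literally, is of size about $K\gg\log n$. A correct bound is short. Since $d_p\le2h-1<p$, each such carry is a first-digit carry of $k+(2h-1)$ in base $p$. So range~(II) contributes at most
\[
\log\binom{k+2h-1}{2h-1}\le\left(\tfrac1{10}+o(1)\right)M,
\]
which is an elementary substitute for the paper's count of primes in the intervals $(k/m,(k+2h)/m]$. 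Finally, the levels $M<p^a\le K$ with $a\ge2$, omitted from your three-way split, contribute only $O(\sqrt K)$.
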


The proof has two selection components.  The arithmetic component bounds the
absolute number of multipliers with a large contribution from high prime
powers.  The
combinatorial component is the following sharp observation: if a finite
interval is coded using \(C\) codewords, and \(\F\) is a set of forbidden
positive differences, then every code fibre whose differences all lie in
\(\F\) has at most \(\lvert\F\rvert+1\) elements.  It is enough to anchor the
fibre at its least point.  This avoids any independence assertion between
simultaneous approximation and the high-power tail.

The rest of the paper is organized as follows.  Section~\ref{sec:prelim}
records the carry formula and the prime-number estimates used in the proof.
Section~\ref{sec:scales} defines the scales and computes the leading exponential
cost of the product-cell code.  Section~\ref{sec:tail} proves the
tail estimate for high prime powers, including the reduced-modulus check needed
before an interval-counting error can be absorbed.  Section~\ref{sec:select}
performs the anchored-fibre selection and the affine fold.
Section~\ref{sec:budget} divides all carries into four disjoint ranges.
Section~\ref{sec:optimization} optimizes the fixed parameters and explains
the order of limits.

Throughout the paper, all logarithms are natural.  Asymptotic notation in a
statement with parameters
\(\lambda,\beta,c,\sigma,\tau\) means that those parameters are held fixed
while the integer \(M\) tends to infinity.  The implicit constants may depend
on these fixed parameters.  Every error term in the carry estimates of
Section~\ref{sec:budget} is uniform over all integers \(1\leq k\leq K\).

\section{Carries and standard prime-number estimates}\label{sec:prelim}

For an integer \(x\) and \(q\geq1\), write \([x]_q\) for the least
nonnegative residue of \(x\) modulo \(q\).  We use
\[
 \vartheta(x)=\sum_{p\leq x}\log p,
 \qquad
 \psi(x)=\sum_{p^a\leq x}\log p,
\]
where \(p\) always denotes a prime and \(a\geq1\).

The next identity is the Legendre--Kummer carry formula
\cite{Kummer1852}.

\begin{lemma}[Legendre--Kummer carry formula]\label{lem:kummer}
For \(0\leq k\leq n\),
\begin{equation}\label{eq:kummer}
 \log u(n,k)
 =\sum_{p\leq k}\sum_{a\geq1}
   \log p\,\ind\{[n]_{p^a}<[k]_{p^a}\}.
\end{equation}
The sum is finite.  In particular, let \(K\) be an integer with
\(1\leq K\leq n\).  If
\begin{equation}\label{eq:criterion}
 \log u(n,k)\leq2\log n\qquad(1\leq k\leq K),
\end{equation}
then \(f(n)>K\).
\end{lemma}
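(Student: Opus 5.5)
The plan is to derive the identity from Legendre's formula for each prime \(p\le k\) and then sum over \(p\). Legendre's formula \(\nu_p(m!)=\sum_{a\ge1}\lfloor m/p^a\rfloor\) gives
\[
 \nu_p\binom nk=\sum_{a\geq1}\Bigl(\Bigl\lfloor\frac n{p^a}\Bigr\rfloor-\Bigl\lfloor\frac k{p^a}\Bigr\rfloor-\Bigl\lfloor\frac {n-k}{p^a}\Bigr\rfloor\Bigr).
\]
Each summand is an integer lying in \(\{0,1\}\). Fix \(q=p^a\) and write \(n=Q_nq+[n]_q\) and \(k=Q_kq+[k]_q\), so that \(n-k=(Q_n-Q_k)q+([n]_q-[k]_q)\).

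If \([n]_q\ge[k]_q\), then \([n]_q-[k]_q\in[0,q)\). Hence \(\lfloor(n-k)/q\rfloor=Q_n-Q_k\), and the summand vanishes.

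If \([n]_q<[k]_q\), then \([n]_q-[k]_q\in(-q,0)\). Hence \(\lfloor(n-k)/q\rfloor=Q_n-Q_k-1\), and the summand equals \(1\).

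In both cases the summand equals \(\ind\{[n]_{q}<[k]_{q}\}\). Multiplying by \(\log p\) and summing over primes \(p\le k\) gives \eqref{eq:kummer}, by the definition \eqref{eq:def-u}.

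For finiteness, note that once \(p^a>n\ge k\) we have \([n]_{p^a}=n\ge k=[k]_{p^a}\), so the indicator vanishes. Only \(a\le\log n/\log p\) and finitely many primes contribute. The case \(k=0\) is trivial, since both sides are zero.

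For the criterion, suppose \eqref{eq:criterion} holds. Then for each \(1\le k\le K\) we have \(u(n,k)\le n^2\), so no such \(k\) belongs to the set defining \(f(n)\) in \eqref{eq:def-f}. The minimum is therefore either attained at some \(k>K\), or it is \(+\infty\) by convention. In either case \(f(n)>K\).

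There is no real obstacle here. The only point needing care is the floor computation for \(n-k\), which is exactly the two-case residue comparison above.
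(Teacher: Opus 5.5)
Your proof is correct and follows the paper's argument essentially verbatim: Legendre's formula, the residue computation showing each $q$-summand equals $\ind\{[n]_q<[k]_q\}$, vanishing of the terms with $p^a>n$, and reading the criterion directly off the definition of $f(n)$. Your two-case floor computation simply spells out the step that the paper states in one line.
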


\begin{proof}
For \(q=p^a\), write \(n=qN+r\) and \(k=qK_0+s\), where
\(0\leq r,s<q\).  The \(q\)-summand in Legendre's formula is
\[
 \left\lfloor\frac nq\right\rfloor
 -\left\lfloor\frac kq\right\rfloor
 -\left\lfloor\frac{n-k}{q}\right\rfloor
 =\ind\{r<s\}.
\]
Summing over \(a\geq1\), then over \(p\leq k\), proves
\eqref{eq:kummer}.  If \(p^a>n\), then
\([n]_{p^a}=n\geq k=[k]_{p^a}\), so the corresponding term vanishes.
The last assertion follows directly from the definition of \(f(n)\).
\end{proof}

We shall use the following standard consequences of Chebyshev's bounds and
the prime number theorem with its classical remainder term; see, for example,
\cite[Chapters~I.1 and II.4]{Tenenbaum}.  There is an absolute constant
\(a_0>0\) such that
\begin{equation}\label{eq:pnt-error}
 \vartheta(x)=x+O\!\left(x\e^{-a_0\sqrt{\log x}}\right)
 \qquad(x\geq2),
\end{equation}
and
\begin{equation}\label{eq:psi-theta}
 \psi(x)-\vartheta(x)=O(\sqrt{x}),
 \qquad
 \psi(x)=x+o(x).
\end{equation}
Indeed,
\[
 \psi(x)-\vartheta(x)
 =\sum_{a\geq2}\vartheta(x^{1/a})
 =O(\sqrt{x})+O(x^{1/3}\log x)=O(\sqrt{x})
\]
by Chebyshev's estimate \(\vartheta(y)\ll y\).
The analogous estimate holds for the prime-counting function \(\pi(x)\).
Partial summation also gives, whenever
\(A\to\infty\) and \(\log A=o(\log M)\),
\begin{equation}\label{eq:prime-harmonic}
 \sum_{M<p\leq AM}\frac1p
 =O\!\left(\frac{\log A}{\log M}\right).
\end{equation}
Only these classical estimates, the carry formula, and the ordinary Chinese
remainder theorem are used.

\section{Scales and the product-cell code}\label{sec:scales}

Fix constants satisfying
\begin{equation}\label{eq:param-basic}
 0<\lambda<1,\qquad \beta>0,\qquad
 0<c<\lambda\beta,\qquad \sigma>2\beta,
\end{equation}
and fix a real number \(\tau\) such that
\begin{equation}\label{eq:param-tau}
 \tau<\sigma-c,
 \qquad
 \beta+\frac3{20}<2(1+\tau).
\end{equation}
The second inequality implies \(1+\tau>0\).  For a sufficiently large
integer \(M\), put
\begin{equation}\label{eq:scales}
 \begin{aligned}
 A&=c\frac{\log M}{\log\log M},
 &K&=\lfloor AM\rfloor,\\
 h&=\left\lfloor\frac{M}{20\log A}\right\rfloor,
 &J&=\lfloor\e^{\sigma M}\rfloor,\\
 L&=L_M=\lcm(1,2,\ldots,M).
 \end{aligned}
\end{equation}
Thus
\begin{equation}\label{eq:scale-facts}
 A\to\infty,\quad 2h<M<K,\quad K=o(M^2),\quad
 \log L=\psi(M)=M+o(M).
\end{equation}

For each prime \(M<p\leq K\), partition the integer residues in
\([0,p)\) into the half-open intervals
\([rh,(r+1)h)\cap[0,p)\).  There are
\(\lceil p/h\rceil\) such cells, and two integer residues in the same cell
differ in absolute value by less than \(h\).  Code each
\(j\in\{0,1,\ldots,J\}\) by the product cell containing
\begin{equation}\label{eq:code}
 \bigl([jL]_p\bigr)_{M<p\leq K}.
\end{equation}
The number of available codewords is at most
\begin{equation}\label{eq:def-CM}
 C_M:=\prod_{M<p\leq K}\left\lceil\frac ph\right\rceil.
\end{equation}

The exponent of this coding cost is needed with its leading constant.

\begin{lemma}[Code entropy]\label{lem:entropy}
For the scales in \eqref{eq:scales},
\begin{equation}\label{eq:entropy}
 \log C_M=(c+o(1))M.
\end{equation}
\end{lemma}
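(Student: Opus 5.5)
We compute \(\log C_M=\sum_{M<p\leq K}\log\lceil p/h\rceil\) directly, sandwiching each summand between \(\log(p/h)\) and \(\log(p/h)+\log 2\). Since \(p>M>2h\), we have \(p/h>2\), so \(\lceil p/h\rceil\leq p/h+1\leq 2p/h\). Hence
\[
 \sum_{M<p\leq K}\log\frac ph\;\leq\;\log C_M\;\leq\;\sum_{M<p\leq K}\log\frac ph+(\pi(K)-\pi(M))\log 2 .
\]
The error term is \(O(K/\log K)=O(AM/\log M)=O(M/\log\log M)=o(M)\), because \(A=c\log M/\log\log M\). So it suffices to show that the main sum equals \((c+o(1))M\).

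For the main sum we write
\[
 \sum_{M<p\leq K}\log\frac ph=\bigl(\vartheta(K)-\vartheta(M)\bigr)-\bigl(\pi(K)-\pi(M)\bigr)\log h .
\]
By \eqref{eq:pnt-error}, \(\vartheta(K)-\vartheta(M)=K-M+O(K\e^{-a_0\sqrt{\log M}})\). For \(\log h\), note that \(h=M/(20\log A)+O(1)\), so \(\log h=\log M-\log\log A+O(1)\). The delicate point is the prime count: we need \(\pi(K)-\pi(M)\) with a relative error that remains small even after multiplying by \(\log M\). We therefore use the PNT in the form \(\pi(x)=\mathrm{li}(x)+O(x\e^{-a_0\sqrt{\log x}})\), the analogue of \eqref{eq:pnt-error} mentioned in Section~\ref{sec:prelim}. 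Equivalently, partial summation gives
\[
 \pi(K)-\pi(M)=\int_M^K\frac{dt}{\log t}+O\bigl(K\e^{-a_0\sqrt{\log M}}\bigr),
\]
and the error, multiplied by \(\log M\), is still \(o(M)\), since \(K\log M\,\e^{-a_0\sqrt{\log M}}=M\cdot O(\log^2M)\,\e^{-a_0\sqrt{\log M}}\).

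We then combine the two pieces into a single integral. Modulo \(o(M)\), the main sum equals
\[
 \int_M^K\Bigl(1-\frac{\log h}{\log t}\Bigr)dt=\int_M^K\frac{\log(t/h)}{\log t}\,dt .
\]
For \(t\in[M,K]\) we have \(\log(t/h)=\log(t/M)+\log\log A+O(1)\) and \(\log t=\log M+O(\log A)\). The integrand is therefore
\[
 \frac{\log(t/M)+\log\log A+O(1)}{\log M}\bigl(1+O(\log A/\log M)\bigr).
\]
Integrating term by term, \(\int_M^K\log(t/M)\,dt=K\log A-K+M\) and \(\int_M^K dt\approx K\). This gives
\[
 \frac{K(\log A+\log\log A+O(1))}{\log M}\bigl(1+o(1)\bigr).
\]
Now \(K\log A/\log M=AM\log A/\log M=cM\log A/\log\log M\), and \(\log A=\log\log M-\log\log\log M+\log c=(1+o(1))\log\log M\). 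So this term is \((c+o(1))M\). The remaining term \(K\log\log A/\log M\) is \(O(M\log\log\log M/\log\log M)=o(M)\), and \(K/\log M=o(M)\) as well. Together these give \(\log C_M=(c+o(1))M\).

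The main obstacle is the cancellation between \(\vartheta\) and \(\pi\cdot\log h\). The difference has size only \(\asymp K\log A/\log M\), which is a factor \(\log M/\log A\) smaller than \(K\). So one cannot use \(\pi(x)\sim x/\log x\) crudely: each error must be \(o(M)\) in absolute terms, and this is exactly what the \(\e^{-a_0\sqrt{\log x}}\) savings in the PNT provide. We should also verify the bookkeeping constant: the factor \(1/(20\log A)\) in \(h\) contributes only \(\log\log A+O(1)\) to \(\log(t/h)\), which is negligible at leading order, while the leading constant \(c\) comes entirely from \(\log(K/M)\approx\log A\).
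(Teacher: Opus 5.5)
Your proof is correct and follows essentially the paper's route. Both arguments use the PNT with its strong remainder to reduce \(\sum_{M<p\leq K}\log(p/h)\) to \(\int_M^K\log(t/h)/\log t\,dt+o(M)\), and both evaluate that integral by replacing \(\log t\) with \(\log M\) at a cost of \(1+O(\log A/\log M)\). The remaining differences are cosmetic: you bound the ceiling error crudely by \((\pi(K)-\pi(M))\log 2=O(M/\log\log M)\) rather than by \(\log\lceil x\rceil-\log x\leq 1/x\) together with \eqref{eq:prime-harmonic}, and you split \(\log(p/h)=\log p-\log h\) into separate \(\vartheta\) and \(\pi\) estimates instead of performing a single weighted partial summation.
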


\begin{proof}
Uniformly for \(M<p\leq K\), one has \(p/h>20\log A\), and
\(20\log A\to\infty\).  Since
\(0\leq\log\lceil x\rceil-\log x\leq1/x\) for \(x\geq1\),
\eqref{eq:prime-harmonic} yields
\begin{equation}\label{eq:ceiling-error}
 \sum_{M<p\leq K}
 \left(\log\left\lceil\frac ph\right\rceil-\log\frac ph\right)
 \leq h\sum_{M<p\leq K}\frac1p=o(M).
\end{equation}
It remains to evaluate
\(S_M=\sum_{M<p\leq K}\log(p/h)\).

By partial summation and the quantitative remainder
\eqref{eq:pnt-error},
\begin{equation}\label{eq:entropy-integral}
 S_M=\int_M^K\frac{\log(x/h)}{\log x}\,dx+o(M).
\end{equation}
Indeed, the total PNT-remainder contribution is
\[
 O\!\left(K\log A\,\e^{-a_1\sqrt{\log M}}\right)=o(M)
\]
for some absolute \(a_1>0\).  On writing \(x=My\), the integral in
\eqref{eq:entropy-integral} becomes
\begin{equation}\label{eq:entropy-scaled}
 M\int_1^A
 \frac{\log(20y\log A)+o(1)}{\log M+\log y}\,dy+o(M).
\end{equation}
The floor in \(h\) accounts for the uniform \(o(1)\).  Replacing the
denominator by \(\log M\) costs at most
\[
 O\!\left(\frac{MA(\log A)^2}{(\log M)^2}\right)=o(M).
\]
Finally,
\[
 \int_1^A\log(20y\log A)\,dy
 =A\log(20A\log A)-A+O(\log\log A)
 =(c+o(1))\log M.
\]
Together with \eqref{eq:ceiling-error}, this proves
\eqref{eq:entropy}.
\end{proof}

Equal codewords give the required simultaneous two-sided approximation.
For an integer \(x\), let \(\|x\|_p\) denote the absolute value of the
representative of \(x\pmod p\) in \((-p/2,p/2]\).

\begin{lemma}[Equal-cell difference]\label{lem:equal-cell}
If \(0\leq i<j\leq J\) have the same code and \(t=j-i\), then
\begin{equation}\label{eq:balanced}
 \|tL\|_p<h\qquad(M<p\leq K).
\end{equation}
\end{lemma}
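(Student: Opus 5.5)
The argument is a direct unwinding of the coding definition, carried out one prime at a time. Fix a prime \(p\) with \(M<p\leq K\). Since \(i\) and \(j\) have the same code, the residues \([iL]_p\) and \([jL]_p\) lie in the same cell \([rh,(r+1)h)\cap[0,p)\) for some integer \(r\geq0\). Both residues are integers in a half-open interval of length \(h\), so
\[
 \bigl|[jL]_p-[iL]_p\bigr|<h .
\]

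Next I will relate this difference to \(tL\). Put \(d=[jL]_p-[iL]_p\). Then \(d\equiv jL-iL=tL\pmod p\) and \(|d|<h\). To pass to the balanced representative, I use that \(2h<M<p\) by \eqref{eq:scale-facts}, so \(|d|<h<p/2\). Thus \(d\) is an integer congruent to \(tL\) modulo \(p\) lying in the open interval \((-p/2,p/2)\). The balanced representative in \((-p/2,p/2]\) is unique, so it equals \(d\). Hence \(\|tL\|_p=|d|<h\).

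Since \(p\) was an arbitrary prime in \((M,K]\), this proves \eqref{eq:balanced}. The only point requiring any care is the inequality \(h<p/2\). It rules out a difference in \((-p,p)\) wrapping around to a different balanced representative. This is exactly what the choice \(h=\lfloor M/(20\log A)\rfloor\), which gives \(2h<M\), ensures for \(M\) large. Note also that \(t\geq1\) because \(i<j\), although positivity of \(t\) plays no role in this lemma itself.
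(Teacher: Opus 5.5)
Your proposal is correct and follows the paper's own proof: the same-cell difference \([jL]_p-[iL]_p\) has absolute value less than \(h\) and represents \(tL \pmod p\), and since \(2h<M<p\) it is the signed representative. Your extra remark that positivity of \(t\) is not needed here is accurate but inessential.
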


\begin{proof}
For each indicated prime, the ordinary integer difference
\([jL]_p-[iL]_p\) has absolute value less than \(h\) and represents
\(tL\pmod p\).  Since \(2h<M<p\), it is the signed representative.
\end{proof}

\section{An exponential estimate for the high-power tail}\label{sec:tail}

For \(1\leq t\leq J\), set
\begin{equation}\label{eq:def-nt}
 n_t=tL-h-1
\end{equation}
and define
\begin{equation}\label{eq:def-Z}
 Z(t)=
 \sum_{\substack{p\leq K,\ a\geq2\\p^a>K}}
 \log p\,\ind\{[n_t]_{p^a}<K\}.
\end{equation}
For all sufficiently large \(M\), \(n_t>K\) for every \(t\geq1\).
Moreover, if \(p^a>n_t\), then
\([n_t]_{p^a}=n_t>K\).  Thus \eqref{eq:def-Z} is a finite sum despite
its notation.

\begin{lemma}[High-power tail]\label{lem:tail}
For every fixed parameter tuple satisfying \eqref{eq:param-basic},
\begin{equation}\label{eq:tail-bound}
 \#\{1\leq t\leq J:Z(t)\geq\beta M\}
 \leq
 2J\exp\!\left(-\lambda\beta M
       +O_\lambda\bigl(K^{(1+\lambda)/2}\bigr)\right).
\end{equation}
In particular, because \(\lambda<1\) is fixed and
\(K=M\log^{1+o(1)}M\), the error in the exponential is \(o(M)\).
\end{lemma}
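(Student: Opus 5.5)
Exponential moments and Markov's inequality are the natural tools: I will bound \(\sum_{t\leq J}\e^{\lambda Z(t)}\) and compare it with the threshold \(\e^{\lambda\beta M}\).

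\textbf{Step 1: expand the exponential moment.} Write \(Z(t)=\sum_{q}\log p\,X_q(t)\), where \(q=p^a\) runs over prime powers with \(p\leq K\), \(a\geq2\), \(q>K\), and \(X_q(t)=\ind\{[n_t]_q<K\}\). Since the indicators are \(0/1\),
\[
 \e^{\lambda Z(t)}=\prod_q\bigl(1+(p^{\lambda}-1)X_q(t)\bigr)
 =\sum_{S}\prod_{q\in S}(p^\lambda-1)\ind\{[n_t]_q<K\ \forall q\in S\},
\]
where \(S\) runs over finite sets of admissible prime powers. I will keep at most one power per prime in \(S\): if \(q=p^a\) and \(q'=p^b\) with \(a<b\), then \([n]_{p^b}<K\) does not directly imply the condition at \(p^a\). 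This forces me to handle each prime as a whole, so I will first replace \(\sum_a\log p\,X_{p^a}\) by \(\log p\) times the number of admissible \(a\). That number is at most \(\log_p n_t\), which is useful only if \(t\) is constrained. A cleaner route is to note that for fixed \(p\) the events \(X_{p^a}\) are nested downward in \(a\) once \(p^a>K\): if \([n]_{p^{a+1}}<K<p^a\), then \([n]_{p^a}=[n]_{p^{a+1}}<K\). The per-prime contribution is therefore \(\log p\cdot\#\{a:\ldots\}\), and \(\e^{\lambda(\cdot)}\) telescopes into a sum over a single top power \(p^{a}\) with weight of size \(p^{\lambda(a-a_0(p)+1)}\), where \(a_0(p)\) is the least exponent with \(p^{a_0(p)}>K\). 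So \(S\) becomes a set of (prime, top power) pairs.

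\textbf{Step 2: count \(t\) for fixed \(S\) (the truncated CRT witness).} For \(t\) counted by \(S\), the conditions say that \(tL\equiv h+1+r_q\pmod q\) with \(0\leq r_q<K\). Since \(p\leq K\), the part of \(q\) dividing \(L\) must be removed. For \(p\leq M\), set \(q^*=q/\gcd(q,L)\); for \(p>M\), take \(q^*=q\). This gives at most \(K\) residue classes for \(t\) modulo each \(q^*\), and only when the residue condition is compatible. By CRT, the count over \(t\leq J\) is at most \(\prod_q K\cdot(J/Q^*+1)\) with \(Q^*=\prod q^*\). The key point is to truncate \(S\) so that \(Q^*\leq J\): replace \(S\) by a maximal subfamily with modulus below \(J=\e^{\sigma M}\). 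This is the ``modulus below the search range'' check, and it lets the \(+1\) be absorbed into \(2J/Q^*\). The relevant \(q\) satisfy \(q\leq n_t\approx JL\), so at most a bounded number of large factors are discarded, and I must verify that their loss costs only \(\e^{o(M)}\).

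\textbf{Step 3: sum the weights.} I then need \(\sum_S\prod_{q\in S}p^{\lambda}K/q^*\) to be \(\exp(O(K^{(1+\lambda)/2}))\), so that it factors over primes as \(\prod_p(1+\sum_a p^{\lambda(\ldots)}K/p^{a})\). Since \(p^{a_0}>K\), each term is about \(K p^{\lambda}/p^{a_0}\). For \(p>\sqrt K\), \(a_0=2\) and the term is \(\leq Kp^{\lambda-2}\). Summing over \(p>\sqrt K\) gives \(\ll K\cdot K^{(\lambda-1)/2}=K^{(1+\lambda)/2}\). For \(p\leq\sqrt K\), the count of primes times \(K^{\lambda/2}\)-type weights is also \(O(K^{(1+\lambda)/2})\). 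Markov's inequality then gives \(\#\{Z\geq\beta M\}\leq \e^{-\lambda\beta M}\cdot 2J\e^{O(K^{(1+\lambda)/2})}\).

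\textbf{Main obstacle.} The hardest part is the interaction with \(L\) for primes \(p\leq M\) when reducing modulo \(q^*\). The condition \(tL\equiv h+1+r\pmod{p^a}\) requires \(p^{v_p(L)}\mid h+1+r\), which leaves about \(K/p^{v_p(L)}\) admissible \(r\). This saving exactly compensates the smaller modulus, but the check has to be done carefully. The truncation to \(Q^*\leq J\) must also be carried out uniformly.
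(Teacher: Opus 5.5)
Your nested-depth encoding, the cancellation of $g_p=p^{\nu_p(L)}$ that you single out as the main obstacle, and the Euler-product computation in Step~3 all agree with the paper. The Markov framing, however, has a genuine gap, exactly at the point you defer in Step~2.

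Write $\sum_{t\le J}\e^{\lambda Z(t)}=\sum_{\mathbf r}w(\mathbf r)N(\mathbf r)$, with $N(\mathbf r)=\#\{t\le J:\ell_p(t)\ge r_p \text{ for all } p\}$. You then need a bound on $N(\mathbf r)$ for \emph{every} depth profile. Once the reduced modulus $Q(\mathbf r)$ exceeds $J$, the CRT bound $R(\mathbf r)\bigl(J/Q(\mathbf r)+1\bigr)$ is dominated by the $+1$, so nothing pays for the weight $w(\mathbf r)\approx\e^{\lambda W(\mathbf r)}$.

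Truncating $S$ to a maximal $S'$ with $Q^*(S')\le J$ bounds the count by $N(S')$, but it leaves the weight of $S\setminus S'$ uncompensated, and that weight is not $\e^{o(M)}$:
\begin{itemize}
\item A single discarded power $q=p^a>J$ with $p>M$ already carries weight $p^{\lambda(a-1)}\ge J^{\lambda}/K$.
\item The truncation need not discard a bounded number of factors, because many prime powers below $J$ can jointly exceed $J$.
\item A given $S'$ is the truncation of enormously many profiles $S$, whose weights would have to be summed with no compensating $1/Q$.
\end{itemize}
A telling symptom is that your sketch never uses the hypothesis $\sigma>2\beta$.

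The missing idea is to apply the exponential weight to a stopped witness rather than to $Z(t)$. For each $t$ with $Z(t)\ge\beta M$, raise the depths $r_p\le\ell_p(t)$ one unit at a time and stop as soon as the weight reaches $\beta M$. Since one increment weighs at most $\log K$, this gives $\beta M\le W(\mathbf r)<\beta M+\log K$.

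Take $a_p$ to be your $a_0(p)$ and $b_p=\nu_p(L)$. Then $\log Q(\mathbf r)=W(\mathbf r)+\sum_{r_p>0}(a_p-1-b_p)\log p\le 2W(\mathbf r)+O(\sqrt K)$. For $p>M$ the overhead is $\log p\le r_p\log p$. For $p\le M$ the overhead summed over all primes is $\{\psi(K)-\vartheta(K)\}-\{\psi(M)-\vartheta(M)\}=O(\sqrt K)$.

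Hence $\sigma>2\beta$ forces $Q(\mathbf r)<J$ for every such prefix. The $+1$ is then absorbed, and each prefix event has at most $2J\prod_{r_p>0}2K/p^{a_p+r_p-1}$ members. Take the union over prefixes and insert $1\le\e^{\lambda(W(\mathbf r)-\beta M)}$. Only then enlarge the purely numerical sum to all profiles; your Step~3 computation gives the $O_\lambda\bigl(K^{(1+\lambda)/2}\bigr)$ error. This is the paper's argument: exponential weighting is applied to a union bound over short witnesses, never to an exponential moment of $Z$.
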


\begin{proof}
We divide the proof into four steps.

\medskip
\noindent\emph{Step 1: nested towers.}
For \(p\leq K\), define
\begin{equation}\label{eq:b-a}
 b_p=\nu_p(L),
 \qquad
 a_p=\min\{a\geq2:p^a>K\}.
\end{equation}
Since \(p^{b_p}\leq M<K<p^{a_p}\), one has \(a_p>b_p\).
For \(a\geq a_p\), the events
\[
 E_{p,a}(t):=\{[n_t]_{p^a}<K\}
\]
are nested downwards in \(a\).  Indeed, if \(E_{p,a+1}(t)\) holds, its
residue is less than \(K<p^a\), so reduction modulo \(p^a\) leaves the
same residue and \(E_{p,a}(t)\) holds.  Let \(\ell_p(t)\) be the length of
the initial true segment beginning at \(a_p\), with length zero if the first
event fails.  The preceding finiteness observation gives
\begin{equation}\label{eq:Z-depth}
 Z(t)=\sum_{p\leq K}\ell_p(t)\log p.
\end{equation}

\medskip
\noindent\emph{Step 2: congruence classes for a depth profile.}
Put \(g_p=p^{b_p}\).  If \(\ell_p(t)\geq r\geq1\), then for some integer
\(0\leq s<K\),
\begin{equation}\label{eq:depth-congruence}
 tL\equiv h+1+s\pmod {p^{a_p+r-1}}.
\end{equation}
Solvability forces \(g_p\mid h+1+s\).  There are at most
\(K/g_p+1\leq2K/g_p\) possible values of \(s\), since
\(g_p\leq M<K\).  After division by \(g_p\), the integer \(L/g_p\) is a
unit modulo
\begin{equation}\label{eq:Qpr}
 Q_p(r)=p^{a_p+r-1-b_p}.
\end{equation}
Consequently, the condition \(\ell_p(t)\geq r\) restricts \(t\) to at
most \(2K/g_p\) residue classes modulo \(Q_p(r)\).

For a finitely supported profile
\(\mathbf r=(r_p)_{p\leq K}\) of nonnegative integers, define
\begin{equation}\label{eq:RQ}
 R(\mathbf r)=\prod_{r_p>0}\frac{2K}{g_p},
 \qquad
 Q(\mathbf r)=\prod_{r_p>0}Q_p(r_p),
 \qquad
 W(\mathbf r)=\sum_{p\leq K}r_p\log p.
\end{equation}
The moduli \(Q_p(r_p)\) have distinct prime bases.  The Chinese remainder
theorem and elementary interval counting therefore give
\begin{equation}\label{eq:crt-plus-one}
 \#\{1\leq t\leq J:\ell_p(t)\geq r_p\text{ for every }p\}
 \leq R(\mathbf r)\left(\frac{J}{Q(\mathbf r)}+1\right).
\end{equation}

\medskip
\noindent\emph{Step 3: a short-modulus witness for every bad multiplier.}
Suppose that \(Z(t)\geq\beta M\).  Starting from the zero profile, add
one unit at a time to coordinates without exceeding
\(\ell_p(t)\), and stop when the weight first reaches \(\beta M\).
Since an increment has weight at most \(\log K\), the resulting prefix
profile satisfies
\begin{equation}\label{eq:minimal-prefix}
 \beta M\leq W(\mathbf r)<\beta M+\log K.
\end{equation}
Let \(\mathcal W_M\) be the finite set of all nonnegative profiles satisfying
\eqref{eq:minimal-prefix}.  Every multiplier with \(Z(t)\geq\beta M\)
belongs to the depth event associated with at least one profile in
\(\mathcal W_M\).  We now prove that every profile in \(\mathcal W_M\) has
reduced CRT modulus below the search length \(J\).

From \eqref{eq:Qpr} and \eqref{eq:RQ},
\begin{equation}\label{eq:logQ}
 \log Q(\mathbf r)
 =W(\mathbf r)+
  \sum_{r_p>0}(a_p-1-b_p)\log p.
\end{equation}
For \(p>M\), the relation \(K<M^2<p^2\) gives
\(a_p=2\) and \(b_p=0\); hence that prime's summand in the second term
is \(\log p\leq r_p\log p\).  For \(p\leq M\), one has
\[
 a_p-1=\max\{a:p^a\leq K\},
 \qquad
 b_p=\max\{b:p^b\leq M\}.
\]
Thus, on summing even over all primes rather than merely the support of
\(\mathbf r\), cancellation of the first-power terms gives the exact identity
\begin{equation}\label{eq:overhead-identity}
 \sum_{p\leq M}(a_p-1-b_p)\log p
 =\{\psi(K)-\vartheta(K)\}
  -\{\psi(M)-\vartheta(M)\}.
\end{equation}
Here \(K<M^2\) ensures that every prime having a square at most \(K\) is
at most \(M\).  The expression in \eqref{eq:overhead-identity} is
nonnegative and is \(O(\sqrt K)=o(M)\) by
\eqref{eq:psi-theta}.  It follows from
\eqref{eq:minimal-prefix}--\eqref{eq:overhead-identity} that
\begin{equation}\label{eq:Q-before-plus-one}
 \log Q(\mathbf r)
 \leq2W(\mathbf r)+O(\sqrt K)
 \leq2\beta M+o(M).
\end{equation}
Put \(\varepsilon_0=(\sigma-2\beta)/3>0\).  Since
\(\log J=\sigma M+o(1)\), for all sufficiently large \(M\) we have
\[
 \log Q(\mathbf r)
 \leq(2\beta+\varepsilon_0)M
 <(\sigma-\varepsilon_0)M
 <\log J.
\]
Thus
\begin{equation}\label{eq:Q-less-J}
 Q(\mathbf r)<J
\end{equation}
for every \(\mathbf r\in\mathcal W_M\).

For every \(\mathbf r\in\mathcal W_M\), \eqref{eq:Q-less-J} permits
\(J/Q(\mathbf r)+1\leq2J/Q(\mathbf r)\) in
\eqref{eq:crt-plus-one}.  The factors
\(g_p\) cancel exactly, and \eqref{eq:crt-plus-one} becomes
\begin{equation}\label{eq:profile-count}
 \#\{1\leq t\leq J:\ell_p(t)\geq r_p\text{ for every }p\}
 \leq
 2J\prod_{r_p>0}\frac{2K}{p^{a_p+r_p-1}}.
\end{equation}
This also covers the original tail containing individual prime powers larger
than \(J\): only the reduced joint modulus of the extracted prefix is used,
and \eqref{eq:Q-less-J} places that modulus in the legitimate range.

\medskip
\noindent\emph{Step 4: exponential weighting.}
Take the union in \eqref{eq:profile-count} over
\(\mathbf r\in\mathcal W_M\).  Apply exponential weighting in the form
\(\ind\{W\geq\beta M\}\leq
\e^{-\lambda\beta M}\e^{\lambda W}\).  Enlarging the resulting
\emph{nonnegative numerical sum} to all profiles gives
\begin{align}
 \#\{1\leq t\leq J:Z(t)\geq\beta M\}
 &\leq2J\e^{-\lambda\beta M}
 \prod_{p\leq K}
 \left(1+\sum_{r\geq1}
   \frac{2Kp^{\lambda r}}{p^{a_p+r-1}}\right)\notag\\
 &=2J\e^{-\lambda\beta M}
 \prod_{p\leq K}
 \left(1+
   \frac{2K}{p^{a_p-1}(p^{1-\lambda}-1)}\right).
 \label{eq:rankin-product}
\end{align}
The last enlargement is purely numerical: the CRT estimate has already been
applied to every profile in \(\mathcal W_M\).

For fixed \(\lambda<1\), one has
\(p^{1-\lambda}-1\gg_\lambda p^{1-\lambda}\) uniformly for \(p\geq2\).
Consequently, the logarithm of the product in
\eqref{eq:rankin-product} is at most a constant depending on \(\lambda\)
times
\[
 \sum_{p\leq\sqrt K}p^\lambda
 +K\sum_{p>\sqrt K}p^{-(2-\lambda)}.
\]
Indeed, for \(p\leq\sqrt K\), the minimality of \(a_p\) gives
\(p^{a_p-1}\leq K<p^{a_p}\), whereas for \(p>\sqrt K\) one has
\(a_p=2\).  Enlarging the prime sums to integer sums shows that the last
display is
\begin{equation}\label{eq:euler-error}
 O_\lambda\bigl(K^{(1+\lambda)/2}\bigr).
\end{equation}
Using \(\log(1+x)\leq x\) proves \eqref{eq:tail-bound}.
Finally,
\[
 \frac{K^{(1+\lambda)/2}}M
 =(1+o(1))M^{-(1-\lambda)/2}
  \left(c\frac{\log M}{\log\log M}\right)^{(1+\lambda)/2}
 =o(1)
\]
for every fixed \(\lambda<1\).
\end{proof}

\section{Anchored-fibre selection and the affine fold}\label{sec:select}

We isolate the combinatorial selection principle.

\begin{lemma}[Anchored code-fibre avoidance]\label{lem:anchor}
Let \(\Phi:\{0,1,\ldots,J\}\to\Omega\), where
\(\lvert\Omega\rvert\leq C\), and let
\(\F\subseteq\{1,\ldots,J\}\).  If
\begin{equation}\label{eq:anchor-condition}
 J+1>C(\lvert\F\rvert+1),
\end{equation}
then there are \(0\leq i<j\leq J\) such that
\begin{equation}\label{eq:anchor-conclusion}
 \Phi(i)=\Phi(j),\qquad j-i\notin\F.
\end{equation}
\end{lemma}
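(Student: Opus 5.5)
We argue by pigeonhole followed by an anchoring argument inside one fibre. The domain \(\{0,1,\ldots,J\}\) has \(J+1\) elements and \(\Phi\) takes at most \(C\) values. By \eqref{eq:anchor-condition} and the pigeonhole principle, some fibre \(\Phi^{-1}(\omega)\) has at least \(\lceil (J+1)/C\rceil\) elements. Since \((J+1)/C>\lvert\F\rvert+1\), this fibre has at least \(\lvert\F\rvert+2\) elements. We fix such a fibre and call it \(S\).

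Next we anchor at the least point. Let \(i=\min S\). The map \(j\mapsto j-i\) is injective from \(S\setminus\{i\}\) into \(\{1,\ldots,J\}\), because every \(j\in S\setminus\{i\}\) satisfies \(i<j\leq J\). Its image therefore has \(\lvert S\rvert-1\geq\lvert\F\rvert+1>\lvert\F\rvert\) elements, so it cannot be contained in \(\F\). Hence some \(j\in S\) with \(j>i\) has \(j-i\notin\F\). Because \(i,j\in S\), we also have \(\Phi(i)=\Phi(j)\), which is exactly \eqref{eq:anchor-conclusion}.

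No step here is a real obstacle. The one point needing care is the direction of the counting: we only ever compare with the fixed anchor \(i\), and distinct \(j\) give distinct differences. This is what makes the bound \(\lvert\F\rvert+1\) on fibres whose differences all lie in \(\F\) sharp, with no need to control the full difference set \(S-S\). It also explains why the strict inequality in \eqref{eq:anchor-condition} is exactly what the pigeonhole step requires.
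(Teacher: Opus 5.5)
Your proof is correct and follows the same route as the paper: pigeonhole gives a fibre with more than \(\lvert\F\rvert+1\) elements, and anchoring at its least element yields distinct positive differences, at most \(\lvert\F\rvert\) of which can lie in \(\F\). The only implicit point is \(C\ge\lvert\Omega\rvert\ge1\), which holds because the domain is nonempty, so dividing by \(C\) is harmless.
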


\begin{proof}
Some fibre contains more than \(\lvert\F\rvert+1\) elements.  Anchor that
fibre at its least element \(i_0\).  As \(j\) runs through the other
elements of the fibre, the positive differences \(j-i_0\) are distinct.
At most \(\lvert\F\rvert\) of them lie in \(\F\), so at least one does not.
\end{proof}

Define the forbidden set of positive differences by
\begin{equation}\label{eq:forbidden}
 \F_M=
 \{1\leq t\leq J:t\leq\e^{\tau M}\}
 \mathbin{\cup}
 \{1\leq t\leq J:Z(t)\geq\beta M\}.
\end{equation}
We apply Lemma~\ref{lem:anchor} to the product-cell code
\eqref{eq:code}.  Lemmas~\ref{lem:entropy} and~\ref{lem:tail} give
\begin{align}
 \frac{C_M(\lvert\F_M\rvert+1)}{J+1}
 &\ll
 \exp\bigl(-(\sigma-c-\tau+o(1))M\bigr)\notag\\
 &\quad+
 \exp\bigl(-(\lambda\beta-c+o(1))M\bigr)
 +\exp\bigl(-(\sigma-c+o(1))M\bigr).
 \label{eq:forbidden-ratio}
\end{align}
All three terms tend to zero.  The first two exponent gaps are positive by
\eqref{eq:param-basic}--\eqref{eq:param-tau}; the third is positive because
\(c<\lambda\beta<\beta<\sigma\).  Thus
\eqref{eq:anchor-condition} holds for all sufficiently large \(M\).
Lemmas~\ref{lem:anchor} and~\ref{lem:equal-cell} provide an integer
\(t=j-i\) such that
\begin{equation}\label{eq:selected-t}
 \e^{\tau M}<t\leq\e^{\sigma M},
 \qquad
 Z(t)<\beta M,
 \qquad
 \|tL\|_p<h\quad(M<p\leq K).
\end{equation}
The tail predicate depends only on the difference \(t\), not on the pair
\((i,j)\); no probabilistic independence is used.

Fix such a \(t\), and henceforth put
\begin{equation}\label{eq:def-n}
 n=tL-h-1.
\end{equation}
For \(M<p\leq K\), let \(z_p\in(-h,h)\cap\mathbb Z\) be the signed
representative of \(tL\pmod p\).  The common shift gives
\begin{equation}\label{eq:affine-fold}
 n+1\equiv z_p-h=-d_p\pmod p,
 \qquad
 d_p=h-z_p\in\{1,2,\ldots,2h-1\}.
\end{equation}
Thus both signs of the symmetric approximation have been folded into the
same final \(2h-1\) residue positions of every \(p\)-block.

\section{The four carry ranges}\label{sec:budget}

Write
\begin{equation}\label{eq:Dnk}
 D_n(k)=\log u(n,k).
\end{equation}
We prove, uniformly for every integer \(1\leq k\leq K\), that
\begin{equation}\label{eq:budget-goal}
 D_n(k)\leq\left(\beta+\frac3{20}+o(1)\right)M.
\end{equation}
The prime-power levels in Lemma~\ref{lem:kummer} are divided into the four
disjoint ranges
\[
 \begin{array}{ll}
 \text{(I)}&q=p^a\leq M,\\
 \text{(II)}&q=p,\quad M<p\leq K,\\
 \text{(III)}&M<q=p^a\leq K,\quad a\geq2,\\
 \text{(IV)}&q=p^a>K.
 \end{array}
\]
In every range the additional restriction \(p\leq k\) from
\eqref{eq:kummer} remains in force.

\subsection{\texorpdfstring{Levels at most \(M\)}{Levels at most M}}

\begin{lemma}[Coherent small-level cost]\label{lem:small-level}
The contribution of range \emph{(I)} is at most
\begin{equation}\label{eq:small-binomial}
 \log\binom{h+k}{h}
 \leq\left(\frac1{20}+o(1)\right)M,
\end{equation}
uniformly for \(k\leq K\).
\end{lemma}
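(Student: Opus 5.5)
Since every level \(q=p^a\leq M\) divides \(L=L_M\), the definition \eqref{eq:def-n} of \(n\) fixes \(n\) modulo \(q\) independently of \(t\). Range (I) is therefore an ordinary Kummer carry count for adding \(h\) and \(k\). The plan is to identify it with the carry count of \(\binom{h+k}{h}\) and then bound that binomial coefficient with the scales \eqref{eq:scales}.

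\emph{Step 1 (reduce modulo \(q\)).} For \(q=p^a\leq M\) one has \(q\mid L\), so \(n=tL-h-1\equiv-(h+1)\pmod q\). Hence
\[
 [n]_q=q-1-[h]_q .
\]
The indicator \(\ind\{[n]_q<[k]_q\}\) from Lemma~\ref{lem:kummer} is then \(\ind\{q-1-[h]_q<[k]_q\}\). This is exactly \(\ind\{[h]_q+[k]_q\geq q\}\), the carry indicator at level \(q\) when \(h\) and \(k\) are added in base \(p\).

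\emph{Step 2 (compare with Kummer for \(\binom{h+k}{h}\)).} Legendre's formula applied to \(\binom{h+k}{h}\), exactly as in the proof of Lemma~\ref{lem:kummer}, gives
\[
 \nu_p\binom{h+k}{h}=\sum_{a\geq1}\ind\{[h]_{p^a}+[k]_{p^a}\geq p^a\}.
\]
The range (I) contribution is the subsum over \(p\leq k\) and \(p^a\leq M\). All terms are nonnegative, so we may drop both restrictions. This bounds the contribution by
\[
 \sum_p\nu_p\binom{h+k}{h}\log p=\log\binom{h+k}{h},
\]
which is the first inequality in \eqref{eq:small-binomial}. The bound holds for every \(k\), with no dependence on \(t\).

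\emph{Step 3 (size estimate).} Use \(\binom{h+k}{h}\leq\bigl(\e(h+k)/h\bigr)^h\) together with \(k\leq K\). This gives
\[
 \log\binom{h+k}{h}\leq h\bigl(1+\log(1+K/h)\bigr).
\]
By \eqref{eq:scales}, \(K/h=(20+o(1))A\log A\), so \(\log(1+K/h)=\log A+O(\log\log A)\). Since \(h\leq M/(20\log A)\) and \(A\to\infty\), the right side is at most \(\frac{M}{20}\bigl(1+O(\log\log A/\log A)\bigr)=(\frac1{20}+o(1))M\), uniformly in \(k\leq K\).

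The only real point is Step 1: the shift by \(h+1\) turns the condition on \(n\) into a carry condition for \(h+k\) coherently across all small levels. The asymptotic estimate in Step 3 is routine.
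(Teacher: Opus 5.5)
Your proposal is correct and follows the paper's proof essentially step for step. Both use the reduction $[n]_q=q-1-[h]_q$ for $q\mid L$, identify range (I) as a nonnegative subsum of the Kummer carry expansion of $\log\binom{h+k}{h}$, and finish with $\binom{h+k}{h}\leq\bigl(\e(1+k/h)\bigr)^h$ and $K/h\asymp A\log A$; the only cosmetic difference is that you absorb the term $h\cdot 1$ into the $O(\log\log A/\log A)$ error instead of writing it as a separate $o(M)$.
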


\begin{proof}
If \(q=p^a\leq M\), then \(q\mid L\), and hence
\begin{equation}\label{eq:small-residue}
 [n]_q=[-h-1]_q=q-1-[h]_q.
\end{equation}
This formula includes the case \([h]_q=q-1\).  Therefore
\begin{equation}\label{eq:carry-identity}
 [n]_q<[k]_q
 \quad\Longleftrightarrow\quad
 [h]_q+[k]_q\geq q.
\end{equation}
The indicator on the right is the \(q=p^a\) carry summand in
\(\nu_p\binom{h+k}{h}\).  Range~\emph{(I)}, which additionally restricts to
\(p\leq k\) and \(q\leq M\), is consequently a subsum of the nonnegative
prime-power expansion of \(\log\binom{h+k}{h}\).

The elementary estimate
\[
 \binom{h+k}{h}\leq
 \left(\e\left(1+\frac{k}{h}\right)\right)^h
\]
gives, uniformly for \(k\leq K\),
\begin{align*}
 \log\binom{h+k}{h}
 &\leq h\left(1+\log\left(1+\frac Kh\right)\right)\\
 &=\frac{M}{20\log A}
   \bigl(\log A+O(\log\log A)\bigr)+o(M)
 =\left(\frac1{20}+o(1)\right)M.
\end{align*}
\end{proof}

\subsection{\texorpdfstring{First prime levels above \(M\)}{First prime levels above M}}

\begin{lemma}[Scheduled first-level cost]\label{lem:first-level}
The contribution of range \emph{(II)} is at most
\begin{equation}\label{eq:first-level-bound}
 \left(\frac1{10}+o(1)\right)M
\end{equation}
uniformly for \(k\leq K\).
\end{lemma}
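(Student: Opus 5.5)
The plan is to turn every carry in range (II) into a prime factor of one fixed binomial coefficient of the same shape as in Lemma~\ref{lem:small-level}, but with \(2h-1\) in place of \(h\). The factor \(2\) from the fold \eqref{eq:affine-fold} is exactly what turns \(\tfrac1{20}\) into \(\tfrac1{10}\).

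\emph{Step 1: characterize a carry at \(q=p\).} Let \(M<p\leq\min(k,K)\). By \eqref{eq:affine-fold}, \(n\equiv-d_p-1\pmod p\) with \(1\leq d_p\leq2h-1<p\). Hence \([n]_p=p-1-d_p\), and the carry condition \([n]_p<[k]_p\) becomes \([k]_p+d_p\geq p\). Since \([k]_p\leq p-1\) and \(d_p<p\), this holds exactly when some integer \(k+j\), with \(1\leq j\leq d_p\leq 2h-1\), is divisible by \(p\). So every prime counted in range (II) divides
\[
 \prod_{j=1}^{2h-1}(k+j)=(2h-1)!\binom{k+2h-1}{2h-1}.
\]
This is the same coherence phenomenon as in \eqref{eq:carry-identity}: the fold puts both signs of the approximation \(\|tL\|_p<h\) into one window of length \(2h-1\).

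\emph{Step 2: discard the factorial and count each prime once.} Each such \(p\) exceeds \(M>2h\), so \(p\nmid(2h-1)!\), and therefore \(p\mid\binom{k+2h-1}{2h-1}\). Range (II) contributes \(\log p\) exactly once for each such prime, because only \(a=1\) occurs there. Hence the whole contribution is at most the logarithm of the product of the distinct primes \(p>M\) dividing \(\binom{k+2h-1}{2h-1}\), which is at most \(\log\binom{k+2h-1}{2h-1}\).

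\emph{Step 3: estimate.} As in the proof of Lemma~\ref{lem:small-level}, use \(\binom{k+m}{m}\leq(\e(1+k/m))^m\) with \(m=2h-1\) and \(k\leq K\approx AM\). This gives
\[
 \log\binom{k+2h-1}{2h-1}\leq 2h\Bigl(1+\log\Bigl(1+\frac{K}{2h}\Bigr)\Bigr)=\frac{M}{10\log A}\bigl(\log A+O(\log\log A)\bigr)=\Bigl(\frac1{10}+o(1)\Bigr)M,
\]
uniformly for \(k\leq K\). Here \(K/h\asymp A\log A\), so the inner logarithm is \(\log A+O(\log\log A)\).

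The main point to check is Step 1. We need \(d_p<p\) and the window \(1\leq j\leq d_p\) to sit entirely in \([1,2h-1]\) for every \(p\) simultaneously. This is where the signed bound \(|z_p|<h\) from Lemma~\ref{lem:equal-cell} and the common shift by \(h\) in \eqref{eq:def-n} are essential. Without the fold, the two signs would force windows on both sides of multiples of \(p\), and no single binomial coefficient would control them. The remaining estimates are routine.
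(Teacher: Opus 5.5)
Your proof is correct, and it takes a genuinely different route from the paper. Both start from the condition $1\le mp-k\le d_p<2h$.

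\textbf{The paper's route.} The paper reads this condition as a location statement: every carrying prime lies in one of the short intervals $I_m(k)=(k/m,(k+2h)/m]$ with $2\le m\le A+1$. It then counts $\vartheta$ on these intervals with the prime number theorem with classical remainder. This count must be uniform in $k$ and over the active indices $m$, which is why the paper needs the bookkeeping with $\mathcal M(k)$ and the lower bound $|I_m(k)|\gg M/\log M$. The result is $2h\sum_{m\le A+1}1/m+o(M)$.

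\textbf{Your route.} You read the same condition as a divisibility statement: $p\mid k+j$ for some $1\le j\le 2h-1$. The contributing primes are distinct and exceed $2h$, so their product divides $\binom{k+2h-1}{2h-1}$. The bound then follows from the same elementary estimate as in Lemma~\ref{lem:small-level}. (The passage from $2h-1$ to $2h$ is harmless, e.g.\ via $\binom{k+2h-1}{2h-1}\le\binom{k+2h}{2h}$.)

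\textbf{What each buys.} Your argument is entirely elementary. It removes both the short-interval prime-number-theorem input and the active-index analysis from range (II). It also shows ranges (I) and (II) as two instances of one coherent-binomial mechanism: the factor $2$ from the fold is exactly what turns $\tfrac1{20}$ into $\tfrac1{10}$.

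The price is only in the secondary term. You get $2h(\log A+\log\log A+O(1))$ instead of the paper's $2h(\log A+O(1))+o(M)$. The extra $2h\log\log A\asymp M\log\log A/\log A$ is $o(M)$, so the leading constant $\tfrac1{10}$ is unaffected.

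The paper's interval count gives slightly more structural information: which primes can carry, with harmonic weights in $m$. Nothing downstream uses it. The prime number theorem is still needed elsewhere, e.g.\ for $\log L=\psi(M)\sim M$ and in Lemma~\ref{lem:entropy}.
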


\begin{proof}
By \eqref{eq:affine-fold},
\begin{equation}\label{eq:nmodp}
 [n]_p=p-1-d_p,
 \qquad 1\leq d_p<2h.
\end{equation}
Suppose \(M<p\leq k\) contributes, and set
\(m=\lfloor k/p\rfloor+1\).  The inequality
\([n]_p<[k]_p\) and \eqref{eq:nmodp} imply
\[
 1\leq mp-k=p-[k]_p\leq d_p<2h.
\]
Consequently,
\begin{equation}\label{eq:active-interval}
 p\in I_m(k):=
 \left(\frac{k}{m},\frac{k+2h}{m}\right],
 \qquad 2\leq m\leq A+1.
\end{equation}

Let \(\mathcal M(k)\) consist only of those indices \(m\) for which
\(I_m(k)\) contains at least one contributing prime in \((M,k]\).
This restriction prevents a spurious use of a relative short-interval
estimate near zero.  If \(m\in\mathcal M(k)\), then the presence of such a
prime implies
\begin{equation}\label{eq:active-scale}
 \frac{k}{m}>M-\frac{2h}{m}\geq M-h=M(1-o(1)),
\end{equation}
whereas
\begin{equation}\label{eq:min-interval-length}
 |I_m(k)|=\frac{2h}{m}\geq\frac{2h}{A+1}
 \asymp\frac{M}{\log M}.
\end{equation}
All upper endpoints are at most \(K+2h\).  It follows from
\eqref{eq:pnt-error}, uniformly in \(k\) and in every active \(m\), that
\begin{equation}\label{eq:interval-pnt}
 \vartheta\!\left(\frac{k+2h}{m}\right)
 -\vartheta\!\left(\frac{k}{m}\right)
 \leq\frac{2h}{m}
 +O\!\left(K\e^{-a_2\sqrt{\log M}}\right)
\end{equation}
for an absolute \(a_2>0\).  There are at most \(A+1\) active indices, and
\(AK\e^{-a_2\sqrt{\log M}}=o(M)\).  Hence, allowing overlap between
intervals (which can only enlarge an upper bound),
\begin{align}
 \sum_{\substack{M<p\leq k\\ {}[n]_p<[k]_p}}\log p
 &\leq
 \sum_{m\in\mathcal M(k)}
 \left\{\vartheta\!\left(\frac{k+2h}{m}\right)
       -\vartheta\!\left(\frac{k}{m}\right)\right\}\notag\\
 &\leq2h\sum_{2\leq m\leq A+1}\frac1m+o(M)
 =\left(\frac1{10}+o(1)\right)M.
 \label{eq:first-level-sum}
\end{align}
The estimate is also valid when the range is empty, so it is uniform for all
\(1\leq k\leq K\).
\end{proof}

\subsection{Intermediate higher powers}

The contribution of range~\emph{(III)} is bounded without using residue
information:
\begin{equation}\label{eq:intermediate}
 \sum_{\substack{M<p^a\leq K\\a\geq2}}\log p
 =\{\psi(K)-\vartheta(K)\}
  -\{\psi(M)-\vartheta(M)\}
 =O(\sqrt K)=o(M).
\end{equation}

\subsection{\texorpdfstring{The tail above \(K\)}{The tail above K}}

If \(q=p^a>K\) and \(p\leq k\leq K\), then necessarily \(a\geq2\) and
\([k]_q=k\).  Thus the contribution of range~\emph{(IV)} is at most
\begin{equation}\label{eq:tail-budget}
 \sum_{\substack{p\leq k,\ a\geq2\\p^a>K}}
 \log p\,\ind\{[n]_{p^a}<k\}
 \leq Z(t)<\beta M.
\end{equation}
Terms with \(p^a>n\) vanish, as noted following \eqref{eq:def-Z}.

Adding \eqref{eq:small-binomial}, \eqref{eq:first-level-bound},
\eqref{eq:intermediate}, and \eqref{eq:tail-budget} proves
\eqref{eq:budget-goal}, with a single \(o(M)\) term that is uniform over every
integer \(1\leq k\leq K\).

\section{Completion and optimization}\label{sec:optimization}

First we record exactly what the fixed parameters give.

\begin{proposition}[Fixed-parameter construction]\label{prop:fixed}
Let \(\lambda,\beta,c,\sigma,\tau\) be fixed and satisfy
\eqref{eq:param-basic}--\eqref{eq:param-tau}.  For every sufficiently large
integer \(M\), there is an integer \(n=n_M\) such that
\begin{equation}\label{eq:fixed-conclusion}
 f(n)>K=\left\lfloor cM\frac{\log M}{\log\log M}\right\rfloor
\end{equation}
and
\begin{equation}\label{eq:n-size}
 (1+\tau+o(1))M\leq\log n\leq(1+\sigma+o(1))M.
\end{equation}
Consequently,
\begin{equation}\label{eq:fixed-normalized}
 \limsup_{n\to\infty}
 \frac{f(n)}{\log n}
 \frac{\log\log\log n}{\log\log n}
 \geq\frac{c}{1+\sigma}.
\end{equation}
\end{proposition}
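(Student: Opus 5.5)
The plan is to assemble the pieces already proved in Sections~\ref{sec:select}--\ref{sec:budget}.  For sufficiently large \(M\), the selection argument following Lemma~\ref{lem:anchor} produces a multiplier \(t\) satisfying \eqref{eq:selected-t}, and we set \(n=tL-h-1\) as in \eqref{eq:def-n}.

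\emph{Step 1: \(f(n)>K\).}  The four-range estimate \eqref{eq:budget-goal} gives
\[
 D_n(k)\leq\bigl(\beta+\tfrac3{20}+o(1)\bigr)M,
\]
uniformly for \(1\leq k\leq K\).  Since \(\tau\) satisfies the second inequality of \eqref{eq:param-tau}, we may fix \(\delta>0\) with
\[
 \beta+\tfrac3{20}+\delta<2(1+\tau).
\]
Step~2 below gives \(2\log n\geq 2(1+\tau+o(1))M\).  Hence \(D_n(k)\leq 2\log n\) for every \(1\leq k\leq K\) once \(M\) is large.  We also have \(K\leq n\), because \(n\geq \e^{\tau M}L-h-1\), \(\log L=M+o(M)\), \(1+\tau>0\), and \(K=o(M^2)\).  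Criterion \eqref{eq:criterion} of Lemma~\ref{lem:kummer} then gives \(f(n)>K\), which is \eqref{eq:fixed-conclusion}.

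\emph{Step 2: the size of \(n\).}  From \(\e^{\tau M}<t\leq \e^{\sigma M}\) and \(\log L=\psi(M)=M+o(M)\), we get
\[
 \log(tL)\in\bigl[(1+\tau+o(1))M,\,(1+\sigma+o(1))M\bigr].
\]
Subtracting \(h+1=o(tL)\) changes \(\log\) only by \(o(1)\), because \(tL\geq \e^{(1+\tau+o(1))M}\) grows exponentially.  This proves \eqref{eq:n-size}.  In particular \(n_M\to\infty\).

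\emph{Step 3: normalization.}  By \eqref{eq:n-size}, \(\log n\asymp M\).  Therefore \(\log\log n=\log M+O(1)\) and \(\log\log\log n=\log\log M+o(1)\).  This gives
\[
 \frac{f(n)}{\log n}\,\frac{\log\log\log n}{\log\log n}
 \geq\frac{K}{(1+\sigma+o(1))M}\cdot\frac{\log\log M+o(1)}{\log M+O(1)}.
\]
Substituting \(K=cM\log M/\log\log M+O(1)\), the right side equals \(c/(1+\sigma)+o(1)\).  Since the \(n_M\) tend to infinity, \eqref{eq:fixed-normalized} follows.

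The only delicate point is Step~1.  It needs a strict gap so that the \(o(M)\) error in \eqref{eq:budget-goal} is absorbed, and it needs the uniformity in \(k\) that Section~\ref{sec:budget} already supplies.  The strict inequality in \eqref{eq:param-tau} provides the gap.  I would check that every \(o(1)\) term here depends only on the fixed parameters, and not on the choice of \(t\).
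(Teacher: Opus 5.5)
Your proposal is correct and follows essentially the same route as the paper. You select \(t\) by \eqref{eq:selected-t}, set \(n=tL-h-1\), absorb the uniform \(o(M)\) error in \eqref{eq:budget-goal} using the strict inequality in \eqref{eq:param-tau} together with the lower size bound on \(\log n\), apply Lemma~\ref{lem:kummer}, and then convert the normalization via \(\log\log n=\log M+O(1)\) and \(\log\log\log n=\log\log M+o(1)\). The only cosmetic difference is that you obtain \(K\leq n\) from the lower bound \(t>\e^{\tau M}\) with \(1+\tau>0\), whereas the paper uses \(n\geq L-h-1\); both are valid.
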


\begin{proof}
Choose \(t\) by \eqref{eq:selected-t} and \(n\) by
\eqref{eq:def-n}.  Since \(\log L=\psi(M)=M+o(M)\) and
\((h+1)/(tL)\to0\),
\begin{equation}\label{eq:log-n-exact}
 \log n=\log t+\log L+o(1).
\end{equation}
The bounds on \(t\) in \eqref{eq:selected-t} imply
\eqref{eq:n-size}.  In particular, the lower size bound and
\eqref{eq:param-tau} give
\begin{equation}\label{eq:strict-threshold}
 \left(\beta+\frac3{20}+o(1)\right)M
 <2\log n
\end{equation}
for all sufficiently large \(M\).  For those \(M\), the observation following
\eqref{eq:def-Z} also gives \(K<n\).  Equations
\eqref{eq:budget-goal} and \eqref{eq:strict-threshold}, together with
Lemma~\ref{lem:kummer}, show that \(f(n)>K\), proving
\eqref{eq:fixed-conclusion}.

The upper size bound in \eqref{eq:n-size} yields
\begin{equation}\label{eq:ratio-M}
 \frac{f(n)}{\log n}
 \geq
 \left(\frac{c}{1+\sigma}+o(1)\right)
 \frac{\log M}{\log\log M}.
\end{equation}
For fixed parameters, \(\log n=\Theta(M)\), so
\begin{equation}\label{eq:iterated-log-conversion}
 \log\log n=\log M+O(1),
 \qquad
 \log\log\log n=\log\log M+o(1).
\end{equation}
Multiplying \eqref{eq:ratio-M} by
\(\log\log\log n/\log\log n\) proves
\eqref{eq:fixed-normalized}.  Finally,
\(n\geq L-h-1\to\infty\), so the constructed integers form an unbounded
sequence (after passage to a subsequence if necessary).
\end{proof}

\begin{proof}[Proof of Theorem~\ref{thm:main}]
Fix \(0<\delta<1/2\) and \(\beta>0\), and choose
\begin{equation}\label{eq:optimized-parameters}
 \lambda=1-\delta,\qquad
 c=(1-2\delta)\beta,\qquad
 \sigma=(2+\delta)\beta,\qquad
 \tau=\beta.
\end{equation}
All required inequalities have explicit positive margins:
\begin{align}
 \lambda\beta-c&=\delta\beta,
 &\sigma-2\beta&=\delta\beta,\notag\\
 \sigma-c-\tau&=3\delta\beta,
 &2(1+\tau)-\left(\beta+\frac3{20}\right)
 &=\beta+\frac{37}{20}.
 \label{eq:parameter-margins}
\end{align}
Proposition~\ref{prop:fixed} therefore gives
\begin{equation}\label{eq:optimized-scalar}
 \limsup_{n\to\infty}
 \frac{f(n)}{\log n}
 \frac{\log\log\log n}{\log\log n}
 \geq
 \frac{(1-2\delta)\beta}{1+(2+\delta)\beta}.
\end{equation}

The order of limits is as follows.  First fix \(\delta\) and \(\beta\),
and let \(M\to\infty\) in Proposition~\ref{prop:fixed}; only after that
limit is taken do we optimize the scalar right side of
\eqref{eq:optimized-scalar}.  Thus no constant in
\(O_\lambda(\cdot)\) and no \(o(M)\) term is required to be uniform as
\(\lambda\uparrow1\) or \(\beta\to\infty\).  Taking the supremum first
over fixed \(\beta>0\) and then over fixed \(\delta>0\) gives
\[
 \sup_{0<\delta<1/2}\sup_{\beta>0}
 \frac{(1-2\delta)\beta}{1+(2+\delta)\beta}
 =\sup_{0<\delta<1/2}\frac{1-2\delta}{2+\delta}
 =\frac12.
\]
This proves \eqref{eq:main}.

Equivalently, one may form a literal diagonal sequence: choose
\(\delta_j\downarrow0\) and \(\beta_j\to\infty\), and then, only after
fixing the pair \((\delta_j,\beta_j)\), choose \(M_j\) sufficiently large
for all fixed-parameter estimates, for
\(\log\beta_j/\log M_j<1/j\), and for \(n_j>n_{j-1}\).  The normalized
coefficient along this sequence tends to \(1/2\).

Finally, the factor
\(\log\log n/\log\log\log n\) tends to infinity.  Hence
\eqref{eq:main} implies \eqref{eq:unbounded}.
\end{proof}

\section{Structure of the argument}

The product-cell map in \eqref{eq:code} is not a homomorphism, and its
fibres need not be cosets.  The sampled orbit is also much shorter than its
full period: if
\(P=\prod_{M<p\leq K}p\), then
\[
 \log P=\vartheta(K)-\vartheta(M)=(1+o(1))AM,
 \qquad
 \log J=\sigma M+o(M),
\]
so \(J/P\to0\).  Accordingly, Lemma~\ref{lem:anchor} treats the product-cell
map as an arbitrary code and uses no coset structure.

The common shift in \eqref{eq:def-n} has two simultaneous effects.  It
orients all signed approximation errors by \eqref{eq:affine-fold}, and it
creates the small-level carry pattern \eqref{eq:carry-identity}.  The latter
pattern is affordable precisely because its total weight is a subsum of one
binomial coefficient.  Exact annihilation of every level below \(M\) is not
needed.

For the upper tail, powers of a fixed prime are encoded by the nested depth
\(\ell_p(t)\), after which the CRT combines the distinct primes.  The
extracted prefix satisfies \(Q(\mathbf r)<J\) before the interval discrepancy
in \eqref{eq:crt-plus-one} is replaced by a factor of two.

\section{Relation to Erd\H{o}s's original question}

In 1979, after defining \(A(n)\) to be the least \(k\) for which
\(u_k^{(n)}>n^2\), Erd\H{o}s observed that Mahler's theorem gives
\(A(n)\to\infty\), but wrote that ``we do not know how fast.''  He added that
Baker's results might yield a crude estimate \cite[pp.~76--77]{Erdos1979}.  With
the modern notation in \eqref{eq:def-u}--\eqref{eq:def-f}, his \(A(n)\) is
precisely the function \(f(n)\) studied here.

Theorem~\ref{thm:main} gives a direct quantitative answer on the worst-case
side of that question: it supplies an explicit subsequential lower bound,
proves that \(f(n)/\log n\) is unbounded, and hence rules out a uniform
logarithmic upper bound.  Together with the known general estimate
\eqref{eq:known-upper}, this answers Erd\H{o}s's original question by giving
explicit bounds for his threshold.  Determining its exact extremal order
remains a separate open quantitative problem.

\section{Formal verification}\label{sec:formal}

Theorem~\ref{thm:main} has been formalized and machine-checked in Lean~4
\cite{Lean4} with the Mathlib library \cite{Mathlib}.  The development is
available at
\begin{center}
\url{https://github.com/jidodat/erdos684-lean}\qquad(tag \texttt{v3-arxiv}).
\end{center}
It follows the paper section by section: the definitions
\eqref{eq:def-u}--\eqref{eq:def-f}, the carry formula of Lemma~\ref{lem:kummer},
the code of Section~\ref{sec:scales}, the CRT count and exponential weighting of
Lemma~\ref{lem:tail}, the anchored-fibre selection of
Section~\ref{sec:select}, the four carry ranges of Section~\ref{sec:budget},
and the optimization of Section~\ref{sec:optimization} are all proved from
Mathlib alone.  The prime number theorem is the single external input.  The
file \texttt{Main.lean} proves the theorem under the explicit hypothesis
\(\vartheta(x)=x+O(x/\log^2x)\), which is weaker than \eqref{eq:pnt-error},
and the file \texttt{PNT.lean} derives this hypothesis from the bound
\(\psi(x)=x+O\bigl(x\exp(-c(\log x)^{1/10})\bigr)\) established in the
\emph{PrimeNumberTheoremAnd} project \cite{PNTAnd}.  Consequently the final
statements \texttt{main\_theorem\_unconditional} (the form of
Theorem~\ref{thm:main} with a sequence \(n_j\to\infty\)),
\texttt{main\_theorem\_limsup} (the \(\limsup\) form \eqref{eq:main}, with
values in the extended nonnegative reals), and
\texttt{unbounded\_unconditional} (statement \eqref{eq:unbounded}) carry no
hypotheses.  For every theorem of the development, Lean's
\texttt{\#print axioms} reports only the three standard axioms
\texttt{propext}, \texttt{Classical.choice}, and \texttt{Quot.sound}.

The formalization differs from the text in a few simplifications, which are
recorded in the repository.  In Lemma~\ref{lem:entropy} only the upper bound
is needed; it is obtained from \(\vartheta(K)\leq(1+o(1))K\) together with
the monotonicity of \(x\mapsto1-\log h/\log x\) in place of partial summation,
so that \eqref{eq:prime-harmonic} is not used.  The estimate
\(\psi(x)-\vartheta(x)=O(\sqrt x)\) is replaced by Mathlib's
\(\psi(x)-\vartheta(x)\leq2\sqrt x\log x\), which is still \(o(M)\).  The
prefix extraction in Step~3 of Lemma~\ref{lem:tail} is an abstract lemma
proved by induction on the total depth, without ordering the primes.  The
decomposition into the ranges (I)--(IV) carries the hypothesis \(M\leq K\),
which is implicit in \eqref{eq:scale-facts}.  Finally, the definitions of
\(u(n,k)\) and \(f(n)\) are checked against the text by proving that
\(u(n,k)\) times the part of \(\binom nk\) supported on primes exceeding
\(k\) equals \(\binom nk\), and that \(f(n)>K\) holds exactly when
\(u(n,k)\leq n^2\) for all \(1\leq k\leq\min(K,n)\).

\end{document}